 \newtheorem{theorem}{Theorem}[section]
 \newtheorem{lemma}[theorem]{Lemma}
 \newtheorem{proposition}[theorem]{Proposition}
\theoremstyle{definition}
\newtheorem{definition}[theorem]{Definition}
\theoremstyle{remark}
\newtheorem{fact*}{Fact}
\newcommand{\C}{\mathbb{C}}
\renewcommand{\R}{\mathbb{R}}
\newcommand{\ad}{^\ast}
\newcommand{\inv}{^{-1}}
\newcommand{\til}{\raise.17ex\hbox{$\scriptstyle\mathtt{\sim}$}}
\newcommand\beq{\begin{equation}}
\newcommand\eeq{\end{equation}}
\newcommand{\bbm}{\left[ \begin{smallmatrix}}
\newcommand{\ebm}{\end{smallmatrix} \right]}
\newcommand{\bpm}{\left( \begin{smallmatrix}}
\newcommand{\epm}{\end{smallmatrix} \right)}
\numberwithin{equation}{section}
\newlength{\Mheight}
\newlength{\cwidth}
\newcommand{\dfn}[1]{{\bf #1}\index{#1}}
\title[The noncommutative Schoenberg theorem]{Noncommutative Schur-type products and their Schoenberg theorem}
\author[J. E. Pascoe]{
J. E. Pascoe
}
\address{Department of Mathematics\\
1400 Stadium Rd\\
  University of Florida\\
 Gainesville, FL 32611}
\email[J. E. Pascoe]{pascoej@ufl.edu}
\date{\today}
\subjclass[2010]{  15B48,   15A24,  15A45.}
\begin{document}

\begin{abstract}
Schoenberg showed that a function $f:(-1,1)\rightarrow \R$ such that $C=[c_{ij}]_{i,j}$ positive semi-definite implies that $f(C)=[f(c_{ij})]_{i,j}$ is also positive semi-definite 
must be analytic and have Taylor series coefficients nonnegative at the origin. The Schoenberg theorem is essentially a theorem about the functional calculus arising from the Schur product,
the entrywise product of matrices.
Two important properties of the Schur product are that the product of two rank one matrices is rank one, and the product of two positive semi-definite matrices is positive semi-definite.
We classify all products which satisfy these two properties and show that these generalized Schur products satisfy a Schoenberg type theorem.
\end{abstract}

\maketitle


\section{The classical case}
	The Schur product of two matrices is given by their entrywise product.
	Two important properties are that the Schur product of two positive semi-definite matrices is again positive semi-definite,
	and that the Schur product of two rank one matrices is again rank one. The natural functional calculus arising from the Schur product
	is entrywise evaluation. That is, given a function $f:(a,b)\rightarrow \mathbb{C},$ we apply the function $f$ to a matrix $C=[c_{ij}]_{i,j}$
	with entries in $(a,b)$
	by defining $f(C)=[f(c_{ij})]_{i,j}.$
	Schoenberg proved the following result, modulo some mild refinements due to Rudin.
	\begin{theorem}[Schoenberg \cite{schoenberg1942}, Rudin \cite{rudin1959}]
		Let  $f:(-1,1)\rightarrow \mathbb{R}.$ If 
		$$C\geq 0 \Rightarrow f(C)\geq 0,$$
		then $f(x) = \sum c_kx^k$ for some coefficients $c_k\geq 0.$
		
		Here, $C \geq 0$ means that the matrix $C$ is positive semi-definite.
	\end{theorem}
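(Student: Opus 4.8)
The plan is to prove both implications, with the reverse (sufficiency) direction serving mainly to motivate the form of the answer. For sufficiency, suppose $f(x)=\sum_k c_k x^k$ with $c_k\geq 0$. If $C=[c_{ij}]\geq 0$ has all entries in $(-1,1)$, then $f(C)=\sum_k c_k C^{\circ k}$, where $C^{\circ k}$ denotes the $k$-fold entrywise (Schur) power. Each $C^{\circ k}\geq 0$ by the Schur product theorem, the series converges entrywise because every $|c_{ij}|<1$, and the cone of positive semi-definite matrices is closed under nonnegative combinations and limits; hence $f(C)\geq 0$. The real content is the forward direction, which I would attack in three stages: elementary constraints, regularity, and nonnegativity of all Taylor coefficients.

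First I would extract the low-order constraints from small matrices. Applying the hypothesis to $1\times 1$ matrices gives $f\geq 0$ on $[0,1)$. Applying it to $2\times 2$ matrices $\left[\begin{smallmatrix} a & b \\ b & c\end{smallmatrix}\right]\geq 0$ (so $a,c\geq 0$ and $b^2\leq ac$) forces $f(a)f(c)\geq f(b)^2$; specializing $b=\sqrt{ac}$ gives the log-convexity bound $f(\sqrt{ac})^2\leq f(a)f(c)$, while specializing $a=c$ shows that $f$ is nondecreasing on $[0,1)$ and that $|f(-t)|\leq f(t)$. It is convenient to reformulate the full hypothesis using the rank-one matrices $[x_ix_j]$: the hypothesis implies that $f(xy)$ is a positive semi-definite kernel on $(-1,1)$, equivalently $\sum_{i,j}\alpha_i\alpha_j f(x_ix_j)\geq 0$ for every finite real configuration $(x_i,\alpha_i)$.

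Next comes regularity, which is the delicate step and precisely the point of Rudin's refinement: a priori $f$ is only known to be monotone on $[0,1)$, hence continuous off a countable set. I would upgrade this by averaging—convolving the positive-definiteness condition against nonnegative kernels yields smoothed functions that are again preservers—and argue that $f$ must in fact be $C^\infty$ on $(-1,1)$. The cleanest target is to show directly that $f$ is absolutely monotonic on $[0,1)$, i.e. $f^{(k)}(x)\geq 0$ for every $k$ and every $x\in[0,1)$. To reach the $k$-th derivative I would feed the hypothesis $(k+1)\times(k+1)$ test matrices degenerating toward a rank-one matrix, so that positivity of $f$ applied to them forces the $k$-th divided difference $f[x_0,\dots,x_k]\geq 0$; letting the nodes coalesce and $k$ range over all integers yields absolute monotonicity.

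Finally, Bernstein's theorem identifies the absolutely monotonic functions on $[0,1)$ as exactly those of the form $\sum_k c_k x^k$ with $c_k\geq 0$ and radius of convergence at least $1$, and real-analyticity together with the domination $|f(-t)|\leq f(t)$ extends the representation across the origin to all of $(-1,1)$. As a sign check, once a power series is in hand one can recover $c_m\geq 0$ directly: evaluating the rank-one form at scaled nodes $x_i=\eps y_i$ and choosing $\alpha$ by a Vandermonde system so that $\sum_i\alpha_i y_i^k=\delta_{km}$ for $k\leq m$ gives $\sum_{i,j}\alpha_i\alpha_j f(x_ix_j)=c_m\eps^{2m}+O(\eps^{2m+2})\geq 0$, whence $c_m\geq 0$ on letting $\eps\to 0$. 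The main obstacle is the middle stage: deriving enough smoothness and the higher-order divided-difference inequalities from the bare positivity hypothesis, without assuming continuity—this is where the real work, and Rudin's contribution, lie.
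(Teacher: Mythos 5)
First, a point of comparison: the paper does not prove this statement at all --- it is quoted as known background, attributed to Schoenberg and Rudin, and used only as motivation for the noncommutative generalization proved later. So there is no in-paper proof to match your argument against; your proposal has to stand on its own as a proof of the classical theorem.

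As such, it is a correct and well-organized roadmap of the standard Schoenberg--Rudin argument (sufficiency via the Schur product theorem; low-order constraints from $1\times1$ and $2\times2$ matrices; absolute monotonicity plus Bernstein's theorem), and the Vandermonde sign-check at the end is sound. But it is not yet a proof, and you identify the gap yourself: the entire content of the theorem lives in the middle stage, and there you only declare intentions. Two things are missing concretely. (1) The smoothing step: you say you ``would upgrade this by averaging,'' but you never specify the convolution. The workable version is multiplicative: $f_t(x)=f(tx)$ is again a preserver for $t\in(0,1]$ because $[f(tc_{ij})]=f(tC)$ and $tC\geq 0$, and the preserver class is a convex cone closed under such dilations, so $\int f_t\,d\nu(t)$ smooths within the class; an additive mollification $f*\phi$ is \emph{not} obviously a preserver and should not be invoked. (2) The divided-difference step: asserting that $(k+1)\times(k+1)$ matrices ``degenerating toward a rank-one matrix'' force $f[x_0,\dots,x_k]\geq 0$ is the right target, but the actual test matrices (e.g.\ $aJ+\epsilon uu^{T}$ with $J$ the all-ones matrix, tested against vectors annihilating low powers of $u$) and the passage from entrywise positivity to the divided-difference inequality \emph{before} any smoothness is known constitute Rudin's actual contribution and are absent. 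Finally, the extension from $[0,1)$ to $(-1,1)$ is glossed over: the bound $|f(-t)|\leq f(t)$ does not by itself show that $f(-t)$ equals the analytic continuation $\sum c_k(-t)^k$; you need further positivity constraints with mixed-sign entries to rule out $f$ being an unrelated function on the negative axis. Until those three items are supplied, the argument is a plan rather than a proof.
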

	Much has been made of the Schoenberg theorem in recent years, see \cite{Belton2019,Belton2019a} for an extensive survey, including several multi-variable generalizations \cite{fitzy}. Our present goal will be to
	characterize products which preserve positivity and rank-ones, and moreover show that they satisfy a Schoenberg type theorem.
\section{Rank-one preserving products}
	We begin by characterizing rank-one preserving products.
	\begin{definition}
		Let $*$ be an associative bilinear product on the space of $n$ by $m$ matrices over $\C$
		with an identity element which has rank one.
		\begin{enumerate}
			\item We say $*$ is a \dfn{rank-one preserving product} or \dfn{ropp} whenever
			the product of two rank one matrices is rank one or less.
			\item	We say $*$ is a  \dfn{semi-definite and rank-one preserving product}
			or \dfn{saropp} whenever $*$ is a ropp and the product of
			two positive semi-definite matrices is again positive semi-definite.
 		\end{enumerate}
	\end{definition}
	The Schur product on $n$ by $m$ matrices is an example of a ropp, and when $n=m$
	the Schur product is a saropp.

	Another class of examples of ropps are the generalized Schur products.
	\begin{definition}	
	Let
	$\mathcal{A}$ be a unital algebra of dimension $n$ and 
	$\mathcal{B}$ be a unital algebra of dimension $m.$
	Fix linear bijections $v_{\mathcal{A}}: \mathcal{A} \rightarrow \C^n$
	and $v_{\mathcal{B}}: \mathcal{B} \rightarrow \C^m.$
	We define a \dfn{generalized Schur product} on $n$ by $m$ matrices
	to be the unique bilinear product satisfying the relation
		$$v_{\mathcal{A}}(a)v_{\mathcal{B}}(b)\ad * 
		v_{\mathcal{A}}(c)v_{\mathcal{B}}(d)\ad
		= v_{\mathcal{A}}(ac)v_{\mathcal{B}}(bd)\ad.$$
	\end{definition}
	When $\mathcal{A}=\mathcal{B}$ and $v_{\mathcal{A}}=v_{\mathcal{B}},$
	the corresponding generalized Schur product is a saropp. 
	Whenever $\mathcal{A} = \C^n$ and $\mathcal{B} = \C^m,$ both endowed with the entry-wise product, we recover the classical Schur product
	as a generalized Schur product. One of the main goals of the current section is to show that 
	all ropps and saropps arise in this way respectively. We will drop the rather outlandish ropp / saropp terminology once that fact is established.

	We note that, for a generalized Schur product, there is a natural isomorphism,
		$$(M_{nm},*) \cong \mathcal{A} \otimes \overline{\mathcal{B}}$$
	where complex conjugation on $\mathcal{B}$ is induced from $v_{\mathcal{B}}.$
	(Concretely, $ab^* \mapsto v_{\mathcal{A}}\inv(a)\otimes v_{\mathcal{B}}\inv(\overline{b})\ad .$)

%
%
%
	\subsection{Ropps are generalized Schur products}
	\begin{proposition} \label{commutes}
		Let $*$ be a ropp on $n$ by $m$ matrices. Write the identity element for $*$ as $11^*.$
		Then,
		$$v1^* * 1w^*= 1w^* * v1^* = vw^*.$$
	\end{proposition}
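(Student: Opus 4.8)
The plan is to reduce the statement to a fact about sesquilinear, rank-one--valued maps and to extract it from the two identity relations together with rank-one preservation. Write the identity as $11^*$, with left factor $1\in\C^n$ and right factor $1\in\C^m$ (possibly different vectors), and set $B(v,w)=v1^* * 1w^*$. Since $*$ is bilinear, $B$ is linear in $v$ and conjugate-linear in $w$, and since $*$ is a ropp, each value $B(v,w)$ has rank at most one. The identity element supplies two boundary conditions: as $11^*$ is a right identity, $B(v,1)=v1^* * 11^*=v1^*$, and as it is a left identity, $B(1,w)=11^* * 1w^*=1w^*$. The goal is to upgrade these to $B(v,w)=vw^*$ for all $v,w$; the reversed product $1w^* * v1^*$ is handled by the identical argument applied to $\tilde B(v,w)=1w^* * v1^*$, whose boundary values $\tilde B(1,w)=1w^*$ and $\tilde B(v,1)=v1^*$ are the same.

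First I would record the classical description of spaces of small rank: if $V\subseteq M_{nm}$ is a linear subspace every element of which has rank at most one, then either all matrices in $V$ share a one-dimensional column space, or they all share a one-dimensional row space (for $\dim V\le 1$ this is trivial, and for $\dim V\ge 2$ it follows from the fact that $a_1b_1^*+c\,a_2b_2^*$ can have rank $\le 1$ for all $c$ only when $a_1\parallel a_2$ or $b_1\parallel b_2$). I would apply this to two-dimensional spans built from $B(v,w)$ and the boundary values. Fixing $w\notin\C 1$ and assuming $B(v,w)$ is not a scalar multiple of $1w^*$, the span $\spn{1w^*,B(v,w)}$ consists of rank $\le 1$ matrices, so $B(v,w)$ has column space $\C 1$ or row space $\C w^*$; looking instead at $\spn{v1^*,B(v,w)}$ forces the column space of $B(v,w)$ to be $\C v$ or its row space to be $\C 1^*$. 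Combining the two dichotomies and using $v\notin\C 1$ and $w\notin\C 1$ leaves only $B(v,w)\in\C\,vw^*$ or the degenerate possibility $B(v,w)\in\C\,11^*$.

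The main obstacle is to eliminate the degenerate branch $B(v,w)\in\C\,11^*$ and to pin the surviving scalar to $1$, and this is where the two boundary conditions must be played against each other through linearity in $v$. I would consider $U=\{v:\ B(v,w)\text{ has column space contained in }\C 1\}$, a subspace containing $1$ and absorbing the degenerate branch; off $U$ one has $B(v,w)=\nu(v)\,vw^*$, and comparing $B(v+1,w)$ with $B(v,w)+1w^*$ forces $\nu(v)=1$, while comparing $B(v+v',w)$ for $v\in U$, $v'\notin U$ against the rank-one constraint forces $U=\C 1$. Hence $B(v,w)=vw^*$ for every $v$ and each fixed generic $w$, and conjugate-linearity in $w$ together with the fact that such $w$ span $\C^m$ gives $B(v,w)=vw^*$ identically. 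The degenerate cases $n=1$ or $m=1$ are immediate from the boundary conditions. It is worth noting that this route uses only bilinearity, rank-one preservation, and the identity relations; associativity of $*$ does not appear to be needed for this particular proposition.
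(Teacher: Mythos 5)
Your overall strategy is essentially the paper's: both arguments use bilinearity of $*$ to generate pencils of rank-$\le 1$ matrices out of $11^*$, $v1^*$, $1w^*$ and the unknown product, then invoke the dichotomy that a span of rank-$\le 1$ matrices has a common column space or a common row space, and finally run a case analysis that isolates $\C\,vw^*$ and kills the degenerate branch $\C\,11^*$. Your packaging through the sesquilinear map $B$ and its two boundary values is a cleaner organization of the paper's four-parameter expansion of $(a1+bv)1^* * 1(c1+dw)^*$, and your observation that associativity is never used is correct --- the paper's proof does not use it either.

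There is, however, one concrete hole in your elimination of the degenerate branch. Both of your closing steps --- deducing $\nu(v)=1$ by comparing $B(v+1,w)$ with $B(v,w)+1w^*$, and deducing $U=\C 1$ by comparing $B(v+v',w)$ for $v\in U$, $v'\notin U$ --- presuppose that some $v'\notin U$ exists. If $U=\C^n$, i.e.\ if $B(v,w)\in\C\,11^*$ for every $v\notin\C 1$, the argument as written concludes nothing, yet this configuration is not excluded by anything you have established up to that point. It can be ruled out with the tools you already have: writing $B(v,w)=\mu 11^*$ for some $v\notin\C 1$ gives $B(v+1,w)=\mu 11^*+1w^*=1(\bar\mu 1+w)^*$, a nonzero matrix with column space $\C 1$ and row space $\C(\bar\mu 1+w)^*$; since $w\notin\C 1$ and $v+1\notin\C 1$, this matrix lies in neither $\C(v+1)w^*$ nor $\C\,11^*$, contradicting your dichotomy applied to $v+1$. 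Alternatively, you can bypass the subspace $U$ entirely by testing the single four-term pencil $B(1+v,1+w)=11^*+1w^*+v1^*+B(v,w)$, which must have rank $\le 1$; this is in effect what the paper does with $a=b=c=d=1$. In the coordinates given by the independent pairs $\{1,v\}$ and $\{1,w\}$, the branch $B(v,w)=\lambda 11^*$ yields the coefficient matrix $\bbm 1+\lambda & 1\\ 1 & 0\ebm$, which has rank $2$ for every $\lambda$, while the branch $B(v,w)=\lambda vw^*$ yields $\bbm 1 & 1\\ 1 & \lambda\ebm$, forcing $\lambda=1$; this handles each $v$ individually and closes the gap in one stroke.
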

	\begin{proof}
		Without loss of generality, $v$ is not in the span of $1$ and  $w$ is not in the span of $1^*.$ 
		Write $v1^* * 1w^* = \nu \omega^*.$
		Note that
		\begin{align*} 
			(a1+bv)1^**1(c1+dw)^*&=ac11^* * 11^*+ ad11^* * 1w^*+\\&\phantom{===}bcv1^* * 11^*+ bdv1^* * 1w^* \\
			&=ac11^* + ad 1w^*+bcv1^*+ bd\nu\omega^*
		\end{align*}
		must be rank $1$ or less for all real choices of $a, b, c, d.$
		
		An elementary argument then says that $\nu\omega^*=vw^*.$ 
		Namely, we know that $\nu\omega^*$ must either have common range or common kernel with $1w^*$ by setting $c=0.$
		So, either we know that we can take $\omega = w,$ or $\omega = 1.$
		In the case where $\omega = w,$ $\nu$ must be a multiple of $v$ as $\nu\omega^*$ must either have common range or common kernel with $1v^*.$ Taking $a=b=c=d=1$ then gives that
		$\nu\omega^*=vw^*.$ In the case where $\omega = 1,$ $\nu$ must be a multiple of $1$ as $\nu\omega^*$ must either have common range or common kernel with $1v^*.$ Taking $a=b=c=d=1$ then 
		a matrix with rank $2,$ contradicting our hypotheses.
		So, we are done.
	\end{proof}

	\begin{proposition} \label{product}
		Let $*$ be a ropp on $n$ by $m$ matrices. Write the identity element for $*$ as $11^*.$
		Then, for all $v, w,$ there exists a $u$ such that
		$$v1^* * w1^*= u1^*.$$
		Similarly,
		$$1v^* * 1w^*= 1u^*.$$
	\end{proposition}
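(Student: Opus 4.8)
The plan is to mirror the elementary argument used for Proposition~\ref{commutes}. Since $11^*$ is the identity, the cases where $v$ or $w$ lies in the span of $1$ are immediate: if $v=\la 1$ then $v1^* * w1^* = \la\, w1^*$, and if $w=\mu 1$ then $v1^* * w1^* = \mu\, v1^*$, both already of the form $u1^*$. So I would assume without loss of generality that neither $v$ nor $w$ is a scalar multiple of $1$. Because $*$ is a ropp and both $v1^*$ and $w1^*$ are rank one, the product $v1^* * w1^*$ is rank one or less; write it as $\nu\omega^*$, the case $\nu\omega^* = 0$ giving $u=0$ immediately, so assume $\nu,\omega\neq 0$. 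The goal is then to show that $\omega$ is a multiple of $1$.

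Next I would expand the bilinear product $(a1+bv)1^* * (c1+dw)1^*$ for real scalars $a,b,c,d$. Using bilinearity together with the identity relations $11^* * x1^* = x1^*$ and $x1^* * 11^* = x1^*$, the three ``pure'' terms collapse into a single rank-one matrix sharing the row direction $1^*$:
\[
(a1+bv)1^* * (c1+dw)1^* = x1^* + bd\,\nu\omega^*, \qquad x := ac\,1 + ad\,w + bc\,v .
\]
Since $*$ is a ropp, the left-hand side is rank one or less for every choice of $a,b,c,d$. The key elementary fact, exactly as in Proposition~\ref{commutes}, is that a sum of two nonzero rank-one matrices $x1^* + \nu\omega^*$ is rank one or less only if $\{x,\nu\}$ is linearly dependent or $\{1,\omega\}$ is linearly dependent.

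Applying this dichotomy: if $\{1,\omega\}$ is dependent, then $\omega$ is a multiple of $1$ and we are done, since $\nu\omega^* = u1^*$. Otherwise $x$ must be a multiple of $\nu$ for every $a,b,c,d$ with $bd\neq 0$. Evaluating at $(a,b,c,d)=(1,1,0,1)$, $(0,1,1,1)$, and $(1,1,1,1)$ forces $\nu$ to be simultaneously parallel to $w$, to $v$, and to $1+v+w$; subtracting then shows $1 \in \spn{\nu}$, whence $w$ is a multiple of $1$, contradicting our reduction. This contradiction forces $\{1,\omega\}$ to be dependent, proving $v1^* * w1^* = u1^*$. The second identity $1v^* * 1w^* = 1u^*$ follows by the symmetric argument with the roles of the left and right factors interchanged.

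The main obstacle is the case in which $v$ and $w$ are themselves linearly dependent: then the two evaluations forcing $\nu\parallel v$ and $\nu\parallel w$ are redundant and yield no contradiction on their own. Introducing the third evaluation, which brings in the identity direction $1$, is precisely what breaks this degeneracy, and I expect it to be the crucial point of the argument.
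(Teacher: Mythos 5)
Your proof is correct and follows essentially the same route as the paper's: expand $(a1+bv)1^* * (c1+dw)1^*$ by bilinearity, invoke the rank-one preservation, and use the fact that a sum of two rank-one matrices with independent column directions and independent row directions has rank two. Your three parameter evaluations forcing $\nu$ to be parallel to $w$, $v$, and $1+v+w$ are a slightly cleaner packaging of the paper's specializations (setting $c=0$, then $ad+bc=0$ after reducing to $v=w$), but the underlying argument is the same.
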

	\begin{proof}
		We will prove the first identity, the second identity is similar.

		Without loss of generality, $v, w$ are not in the span of $1.$
		Write $v1^* * w1^* = \nu \omega^*.$
		Note that
		\begin{align*} 
			(a1+bv)1^**(c1+dw)1^*&=ac11^* * 11^*+ ad11^* * w1^*+\\&\phantom{===}bcv1^* * 11^*+ bdv1^* * w1^* \\
			&=ac11^* + (adw+bcv)1^* + bd\nu\omega^*
		\end{align*}
		must be rank $1$ or less for all real choices of $a, b, c, d.$

		An elementary argument then says that $\omega^*$ was in the span of $1^*.$ (If the product was non-zero.)
		Namely, we know that $\nu\omega^*$ must either have common range or common kernel with $w1^*$ by setting $c=0.$	
		If $\omega^*$ is not in the span of $1^*,$ then we can take $\nu = w.$
		Moreover, $\nu$ is also a multiple of $v.$ Without loss of generality $v=w.$
		So, we have that $ac11^* + (ad+bc)v1^* + bdv\omega^*$ is rank one.
		So, taking $ad+bc =0,$ we see that $ac11^* + bdv\omega^*$ is rank $1$ and so $\omega^*$ must be a multiple of $1^*.$
		So, we are done.
	\end{proof}

	\begin{theorem}
		If $*$ is a ropp on $n$ by $m$ matrices, then $*$ is a generalized Schur product.
	\end{theorem}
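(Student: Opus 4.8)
The plan is to manufacture the two algebras directly from the product $*$ and then verify the defining relation of a generalized Schur product on rank-one matrices, which span $M_{nm}$. By Proposition \ref{product}, for each $v, w \in \C^n$ there is a unique vector, call it $v \star_{\A} w$, with $v1^* * w1^* = (v \star_{\A} w)1^*$; uniqueness holds because $u \mapsto u1^*$ is injective. Bilinearity of $\star_{\A}$ is inherited from bilinearity of $*$, associativity from associativity of $*$ (applying $u \mapsto u1^*$ to $(v1^* * w1^*) * z1^* = v1^* * (w1^* * z1^*)$), and the vector $1$ is a two-sided unit because $11^*$ is the identity for $*$. Thus $\A := (\C^n, \star_{\A})$ is a unital algebra of dimension $n$. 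Symmetrically, using the second identity of Proposition \ref{product}, I would define $v \star_{\B} w \in \C^m$ by $1v^* * 1w^* = 1(v \star_{\B} w)^*$; here one checks that although $v \mapsto 1v^*$ is conjugate-linear, the product $\star_{\B}$ is genuinely $\C$-bilinear, and again $1$ is a unit, so $\B := (\C^m, \star_{\B})$ is a unital algebra of dimension $m$.

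Then I would take $v_{\A}$ and $v_{\B}$ to be the identity maps, so that the generalized Schur product relation to be verified reads
$$ab^* * cd^* = (a \star_{\A} c)(b \star_{\B} d)^*$$
for all $a, c \in \C^n$ and $b, d \in \C^m$. The engine of the argument is the observation, immediate from Proposition \ref{commutes}, that $ab^* = a1^* * 1b^*$, i.e.\ every rank-one matrix factors through the identity vectors. Expanding both factors this way and using associativity,
$$ab^* * cd^* = a1^* * 1b^* * c1^* * 1d^*.$$
Proposition \ref{commutes} also tells us that the $\A$-type factor $c1^*$ and the $\B$-type factor $1b^*$ commute, since $1b^* * c1^* = cb^* = c1^* * 1b^*$; swapping this adjacent pair and regrouping gives
$$ab^* * cd^* = (a1^* * c1^*) * (1b^* * 1d^*) = (a \star_{\A} c)1^* * 1(b \star_{\B} d)^* = (a \star_{\A} c)(b \star_{\B} d)^*,$$
where the last equality is again Proposition \ref{commutes}. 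This is exactly the defining relation. Since the rank-one matrices $ab^*$ span $M_{nm}$ and $*$ is bilinear, $*$ agrees with the generalized Schur product determined by $\A, \B$ on a spanning set, hence everywhere; by the uniqueness in the definition, $*$ is that generalized Schur product.

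I expect the only real friction to be bookkeeping rather than ideas: checking the algebra axioms for $\star_{\A}$ and $\star_{\B}$ (routine once injectivity of the embeddings is noted), and in particular confirming that the conjugate-linearity of $v \mapsto 1v^*$ does not spoil the $\C$-bilinearity of $\star_{\B}$ — the conjugations on the two inputs cancel against the conjugation on the output, so $\star_{\B}$ is linear, and this is precisely the reason the second algebra enters as $\overline{\B}$ in the isomorphism $(M_{nm}, *) \cong \A \otimes \overline{\B}$ noted earlier. The genuinely substantive content has already been extracted in Propositions \ref{commutes} and \ref{product}; the theorem is their assembly.
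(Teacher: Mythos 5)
Your proposal is correct and follows essentially the same route as the paper: define the algebra structures on $\C^n$ and $\C^m$ via Proposition \ref{product}, take $v_{\mathcal{A}}, v_{\mathcal{B}}$ to be the identity, and verify the generalized Schur product relation on rank-one matrices by factoring $ab^* = a1^* * 1b^*$ and commuting the middle factors via Proposition \ref{commutes}. You simply spell out more of the bookkeeping (injectivity of $u \mapsto u1^*$, the algebra axioms, and the conjugate-linearity of $v \mapsto 1v^*$) that the paper leaves implicit.
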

	\begin{proof}
		 Write the identity element for $*$ as $11^*.$
		Define the algebra $\mathcal{A} = \C^n$ to be an algebra with product such that $(vw)1^* = v1^**w1^*.$ (This is well defined by Proposition \ref{product}.)
		Similarly, let $\mathcal{B}=\C^m$ be endowed with the product $1(vw)^* = 1v^**1w^*.$
		Using $\mathcal{A}$ and $\mathcal{B}$ and $v_\mathcal{A}, v_\mathcal{B}$ being the identity gives that $*$ was a generalized Schur product.
		That is, applying Proposition \ref{commutes}, we see that
		\begin{align*} 
			v_{\mathcal{A}}\inv(a)v_{\mathcal{B}}\inv(b)\ad * 
			v_{\mathcal{A}}\inv(c)v_{\mathcal{B}}\inv(d)\ad &
			= ab^* * cd^*\\
			&= a1^* * 1b^* * c1^* * 1d^*\\
			&= a1^* * c1^*  * 1b^* *  1d^*\\
			&= (ac)1^* * 1(bd)^* \\
			&= v_{\mathcal{A}}\inv(ac)v_{\mathcal{B}}\inv(bd)\ad.
		\end{align*}
	\end{proof}
\section{The Schoenberg theorem}
	First we define the natural domain for the noncommutative Schoenberg theorem.
	Given a generalized Schur product $*$ on the set of $n$ by $m$ matrices, we define $\mathcal{S}_*$ to be the algebra of $n$ by $m$ matrices equipped with $*.$
	We define the \dfn{$d$-dimensional generalized Schur universe} to be 
		$$\mathcal{S}^d = \bigsqcup \mathcal{S}_*^d$$
	where the disjoint union is taken over all generalized Schur products $*.$
	We define the \dfn{$N$ by $N$ matrix generalized Schur universe} to be 
		$$\mathcal{S}^{N \times N} = \bigsqcup M_N(\mathbb{C}) \otimes \mathcal{S}_*.$$
	\begin{definition}
		Let $*$ be a generalized Schur product on $n$ by $m$ matrices.
		We say a $d$-tuple of $n$ by $m$ matrices $(X_1, \ldots, X_d)$ is a \dfn{Schur spectral contraction with respect to $*$} whenever
		there are $A, B$ such that the tuple
			$$\bbm A & X^* \\ X & B  \ebm$$
		is positive definite and has joint spectral radius less than $1.$
		(Here, the block matrix sits in $m+n$ by $m+n$ matrices endowed with generalized Schur product with algebra $\mathcal{A}\oplus \mathcal{B},$ and the joint spectral radius is
		taken with respect to that product.)

		We denote the set of Schur spectral contractions in $d$ variables by $\mathcal{C}^d.$

		Since the joint spectral radius is continuous \cite{strangcontinuity}, the set of Schur spectral contractions is open.
	\end{definition}
	
	Now we define our class of functions.
	\begin{definition}
		Let $\mathcal{U} \subseteq \mathcal{S}^d.$
		We define a \dfn{scalar noncommutative function} $f:\mathcal{U} \rightarrow \mathcal{S}$ satisfying the following axioms.
		\begin{enumerate}
			\item $f(X)=f(X_1,\ldots, X_d)$ is contained in the algebra generated by $X_1, \ldots, X_d.$
			\item If there is a homomorphism $\varphi$ from the algebra generated by $(X_1, \ldots, X_d)\in\mathcal{U}$ to the algebra generated by 
			$(Y_1, \ldots, Y_d)\in \mathcal{U}$ such that $\varphi(X_i)=Y_i,$ then $f(Y) = \varphi(f(X)).$ 
		\end{enumerate} 
		We say a function $f:\mathcal{U} \rightarrow \mathcal{S}^{N \times N}$ is a \dfn{noncommutative function} if each of its block entries is a scalar noncommutative function.

		We note that this implies that
			$$f\left(\bbm A & B \\ C & D  \ebm\right) = \bbm f(A) & f(B) \\ f(C) & f(D)  \ebm$$
		when a block matrix is given a generalized Schur product corresponding to a direct sum on both sides and the block matrix is in the domain.

		We say a map $f:\mathcal U \rightarrow \mathcal{S}^{N \times N}$ is \dfn{positivity preserving} whenever $X \in \mathcal S^d_* \cap \mathcal{U},$ where $*$ is positivity preserving and $X \geq 0,$
 		implies that $f(X) \geq 0.$
	\end{definition}

	\begin{lemma}
		A positivity preserving map is locally bounded on the set of Schur spectral contractions.
	\end{lemma}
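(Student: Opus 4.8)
The plan is to realize $f(X)$ as the off-diagonal corner of a positive semidefinite block matrix whose diagonal corners do not depend on $X$, and then to read off a uniform bound from the elementary geometry of positive block matrices. The definition of a Schur spectral contraction is tailor-made for this: it hands us exactly such a positive definite completion, and the only work is to show that the completion can be held fixed as the off-diagonal entry varies.

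Concretely, I would fix a Schur spectral contraction $X_0$ with respect to some $*$, say on $n$ by $m$ matrices with algebras $\mathcal{A}$ and $\mathcal{B}$. By definition there are $A_0,B_0$ so that
$$M_0 = \bbm A_0 & X_0^* \\ X_0 & B_0 \ebm$$
is positive definite with joint spectral radius less than $1$ in the $(m+n)$ by $(m+n)$ generalized Schur universe attached to $\mathcal{A}\oplus\mathcal{B}$. Positive definiteness is an open condition, and by the continuity of the joint spectral radius \cite{strangcontinuity} so is the requirement that it be less than $1$. Holding $A_0$ and $B_0$ fixed, there is therefore a neighborhood $U$ of $X_0$ such that for every $X\in U$ the matrix $M = \bbm A_0 & X^* \\ X & B_0 \ebm$ is still positive definite with joint spectral radius less than $1$, and hence lies in the domain on which $f$ is positivity preserving.

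Now the block matrix $M$ carries the generalized Schur product of the single algebra $\mathcal{A}\oplus\mathcal{B}$ on both sides, so that product is a saropp, i.e.\ positivity preserving. Since $M\geq 0$, positivity preservation of $f$ gives $f(M)\geq 0$, and the block compatibility of noncommutative functions (the displayed ``note'') yields
$$f(M) = \bbm f(A_0) & f(X^*) \\ f(X) & f(B_0) \ebm \geq 0,$$
whose diagonal corners $f(A_0)$ and $f(B_0)$ depend only on the fixed data $A_0,B_0$ and so are constant over $U$. Finally, any positive semidefinite block matrix $\bbm P & Q^* \\ Q & R \ebm$ factors as $Q = R^{1/2}KP^{1/2}$ for a contraction $K$, whence $\norm{Q}\leq\norm{P}^{1/2}\norm{R}^{1/2}$; applying this with $P=f(A_0)$, $Q=f(X)$, $R=f(B_0)$ gives
$$\norm{f(X)} \leq \norm{f(A_0)}^{1/2}\,\norm{f(B_0)}^{1/2} \qquad \text{for all } X\in U,$$
a bound independent of $X$, which is exactly local boundedness.

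The step I expect to be the crux is the second paragraph: guaranteeing that the completed matrix $M$ remains in the domain of $f$ as $X$ ranges over $U$, with the completion data $A_0,B_0$ kept fixed. Everything else is formal once this is in place. The justification rests on two points, both of which should be isolated and checked carefully: the openness of the defining conditions for a Schur spectral contraction (positive definiteness open, joint spectral radius continuous), and the fact that the enveloping direct-sum product $\mathcal{A}\oplus\mathcal{B}$ on square matrices is genuinely positivity preserving, so that the hypothesis of positivity preservation of $f$ actually applies to $M$ rather than merely to $X$ itself.
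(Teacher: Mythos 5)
Your proposal is correct and follows essentially the same route as the paper: fix the positive definite completion $A,B$ from the definition of a Schur spectral contraction, use continuity of the joint spectral radius and openness of positive definiteness to keep the completed block matrix in the domain on a neighborhood, apply positivity preservation and block compatibility to get $f$ of the block matrix positive semidefinite, and read off a uniform bound on the off-diagonal corner. The only cosmetic difference is the final estimate ($\norm{f(A)}^{1/2}\norm{f(B)}^{1/2}$ versus the paper's $\max(\norm{f(A)},\norm{f(B)})$), both of which follow from positivity of the block matrix.
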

	\begin{proof}
		Suppose $X$ is a Schur spectral contraction. By definition, there are $A, B$ such that the tuple
			$$\bbm A & X^* \\ X & B  \ebm$$
		is positive definite and has joint spectral radius less than $1.$ 
		Therefore, by continuity of the joint spectral radius and the smallest eigenvalue, for any small $H,$ $X+H$ we also have that 
		$$\bbm A & (X+H)^* \\ X+H & B  \ebm$$
		is positive definite and has joint spectral radius less than $1.$
		Therefore, $$f(\bbm A & (X+H)^* \\ X+H & B  \ebm) = \bbm f(A) & f((X+H)^*) \\ f(X+H) & f(B)  \ebm \geq 0.$$
		So, $$\|f(X+H)\| \leq \max(\|f(A)\|,\|f(B)\|).$$
	\end{proof}
	
	The following theorem is omnipresent in the noncommutative function theory literature, see \cite{vvw12} for a treatment in ordinary setting. We prove the same holds in the functional calculus
	arising from generalized Schur products.
	\begin{lemma}\label{differentiable}
		If $f$ is a positivity preserving map on the set of Schur spectral contractions, then $f$ is differentiable.
		
		Hence, since the function is defined on an open complex domain, the function is analytic.
	\end{lemma}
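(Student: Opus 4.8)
The plan is to reproduce, in the generalized Schur product setting, the standard difference-differential calculus for noncommutative functions (as in \cite{vvw12}), the crucial analytic input being the local boundedness established in the previous lemma. First I would introduce the finite-difference operator via block upper-triangular matrices: for $X, Y \in \mathcal{U}$ and a perturbation $H$, consider the tuple whose $i$-th entry is $\begin{pmatrix} X_i & H_i \\ 0 & Y_i \end{pmatrix}$ in the block universe $\mathcal{S}^{2\times 2}$. The projections onto the top-left and bottom-right blocks are algebra homomorphisms sending this tuple to $X$ and $Y$ respectively, so axiom (2) forces
$$f\begin{pmatrix} X & H \\ 0 & Y \end{pmatrix} = \begin{pmatrix} f(X) & \Delta f(X,Y)[H] \\ 0 & f(Y) \end{pmatrix},$$
which defines the off-diagonal map $\Delta f(X,Y)[H]$. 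For this to be legitimate one must check the block matrix is again a Schur spectral contraction, so that it lies in $\mathcal{U}$; this uses that the joint spectral radius of a triangular tuple is controlled by its diagonal and that the positivity/contractivity data for $X$ and $Y$ assemble into data for the block.

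Next I would show $H \mapsto \Delta f(X,Y)[H]$ is \emph{complex}-linear. Complex homogeneity follows by conjugating the block matrix by $\mathrm{diag}(\lambda\, 11^*, 11^*)$, which for $\lambda \neq 0$ is an algebra isomorphism, so that axiom (2) yields $\Delta f(X,Y)[\lambda H] = \lambda\, \Delta f(X,Y)[H]$. Additivity follows from the usual intertwining trick: applying $f$ to the $3\times 3$ block-triangular tuple with diagonal $(X,Y,Y)$ and reading off entries through the rectangular intertwiner collapsing the two copies of $Y$, one gets $\Delta f(X,Y)[H_1+H_2] = \Delta f(X,Y)[H_1] + \Delta f(X,Y)[H_2]$. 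Conjugating the triangular tuple with diagonal $(X,Y)$ and off-diagonal $Y-X$ by $\begin{pmatrix} 11^* & 11^* \\ 0 & 11^* \end{pmatrix}$ identifies it with the direct sum $X \oplus Y$, producing the fundamental identity
$$\Delta f(X,Y)[Y-X] = f(Y) - f(X).$$

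I would then extract differentiability. Setting $Y = X + \lambda H$ and using linearity, the identity reads
$$\frac{f(X + \lambda H) - f(X)}{\lambda} = \Delta f(X, X + \lambda H)[H],$$
and the right-hand side is exactly an off-diagonal block of $f$ evaluated at a block matrix, hence locally bounded by the previous lemma. Local boundedness of the difference quotients makes $f$ locally Lipschitz, hence continuous, and continuity of $f$ makes $(X,Y,H) \mapsto \Delta f(X,Y)[H]$ jointly continuous. Writing the remainder as $f(X+H) - f(X) - \Delta f(X,X)[H] = \big(\Delta f(X, X+H) - \Delta f(X,X)\big)[H]$ and combining joint continuity with the local bound on the operator norms of $\Delta f$ shows this is $o(\|H\|)$. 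Thus $f$ is Fréchet differentiable with complex-linear derivative $H \mapsto \Delta f(X,X)[H]$; since the domain is open in a finite-dimensional complex vector space, complex differentiability is analyticity.

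The hard part will be the domain bookkeeping rather than the algebra: one must confirm that every auxiliary object used — the $2\times 2$ and $3\times 3$ block-triangular tuples, their diagonal scalings, and the similarity to a direct sum — stays inside the set of Schur spectral contractions, since that set is cut out by a positivity-plus-joint-spectral-radius condition that is not preserved by the non-unitary similarities and rectangular intertwiners the argument relies on. Establishing that $\mathcal{U}$ is stable (at least locally) under these operations, and verifying that the generalized Schur product is compatible with the block/tensor structure so that the projections and conjugations are genuine algebra homomorphisms, is where the adaptation of the classical proof does its real work.
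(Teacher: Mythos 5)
Your proof rests on evaluating $f$ at the block upper-triangular tuple $\bigl(\begin{smallmatrix} X & H \\ 0 & Y\end{smallmatrix}\bigr)$ and reading off a difference-differential operator from its off-diagonal corner, which is the right move in the Kaliuzhnyi-Verbovetskyi--Vinnikov setting but does not survive transplantation to this one. For the off-diagonal entry of $f$ to be the operator $\Delta f(X,Y)[H]$ you need the blocks to multiply by genuine matrix multiplication, i.e.\ the tuple must live in $M_2(\mathbb{C})\otimes\mathcal{S}_*$. But the domain of $f$ is $\mathcal{U}\subseteq\mathcal{S}^d=\bigsqcup\mathcal{S}_*^d$, and $M_2(\mathbb{C})\otimes\mathcal{S}_*$ is in general not a generalized Schur algebra at all: already for $n=m=1$ one would need $M_2(\mathbb{C})\cong\mathcal{A}'\otimes\overline{\mathcal{B}'}$ with $\mathcal{A}',\mathcal{B}'$ two-dimensional unital algebras, and every such tensor product is commutative. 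The only block structures available inside $\mathcal{S}^d$ are the direct-sum ones, and under those the product is \emph{blockwise}, so that (as the paper notes) $f$ applied to a block matrix is $f$ applied entry-by-entry; the off-diagonal corner of $f\bigl(\begin{smallmatrix} X & H \\ 0 & Y\end{smallmatrix}\bigr)$ is then just $f(H)$, not a linear map in $H$. Consequently your construction of $\Delta f$, the similarity argument for $\Delta f(X,Y)[Y-X]=f(Y)-f(X)$, and the rectangular-intertwiner argument for additivity all lack the axioms and the objects they would need -- the "domain bookkeeping" you defer is not bookkeeping but a structural obstruction. A further symptom: your argument uses positivity preservation only through local boundedness, whereas local boundedness of an arbitrary map on an open set in a finite-dimensional space says nothing about differentiability without the triangular-matrix machinery.

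The paper's proof uses the positivity hypothesis directly and in an essentially Loewner-theoretic way: it applies $f$ blockwise to the positive block matrix $\bigl(\begin{smallmatrix} Y+H & Y \\ Y & Y+H\end{smallmatrix}\bigr)$ (which \emph{is} in a direct-sum generalized Schur algebra) and compresses by the vector $\bigl(\begin{smallmatrix}1/\sqrt2\\-1/\sqrt2\end{smallmatrix}\bigr)$ to conclude $f(Y+H)-f(Y)\geq 0$ for positive directions $H$; iterating shows the higher divided differences are positive, and a function with nonnegative third divided differences is differentiable. If you want to salvage your write-up, that is the mechanism to adopt: positivity of divided differences, not the difference-differential calculus on nilpotent dilations.
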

	\begin{proof}
		Let $X$ be a Schur spectral contraction.
		By definition, there are $A, B$ such that the tuple
			$$Y=\bbm A & X^* \\ X & B  \ebm$$
		is positive definite and has joint spectral radius less than $1.$
		Fix a direction $K$ and find 
			$$H=\bbm G & K^* \\ K & F  \ebm$$
		which is positive definite.
		We will show that $f$ is differentiable at $Y,$ and thus at $X$ by showing that the third difference quotient is always positive for positive directions $H.$
		That is, without loss of generality, we will show that $f(Y+3H)-3f(Y+2H)+3f(Y+H)-f(X)\geq 0$ whenever $Y, H \geq 0$ and the quantity is well defined. (A function whose third difference quotient is always positive 
		must be differentiable.)

		We will show the first difference quotient is positive, and then an inductive argument essentially proves the claim. Consider 
			$$f\left(\bbm 
			Y+H & Y \\ Y &Y+H
			 \ebm\right)= \bbm 
			f(Y+H) & f(Y) \\ f(Y) &f(Y+H)
			 \ebm\geq 0.$$
		Evaluating on the block vector $\bbm 1/\sqrt 2  \\ -1/\sqrt 2 \ebm,$ we see that $f(X+H)-f(X) \geq 0,$ so we are done. (Essentially, the function $g(X,H)=f(X+H)-f(X)$ is again positivity preserving,
		so in fact an arbitrary difference quotient is positive. This induces a function on the domain of points $(X,H)\in \mathcal{S}^{2d}$ such that $X+H \in \mathcal{C}^d.$) Repeating this process eventually
		proves the claim.
	\end{proof}
	We now state and prove our noncommutative generalization of Schoenberg's theorem.
	\begin{theorem}[The noncommutative Schoenberg theorem]
		Let $f$ be a noncommutative function on the set of Schur spectral contractions which is positivity preserving.
		Then, $f$ has a noncommutative power series representation $f(X) = \sum c_\alpha X^\alpha$
		which converges for all Schur spectral contractions, and each $c_\alpha \geq 0.$

		Here, the power series is evaluated with the underlying generalized Schur product $*.$ 
	\end{theorem}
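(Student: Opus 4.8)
The plan is to build on the two lemmas already in hand: Lemma~\ref{differentiable} makes $f$ analytic, and the local boundedness lemma controls its growth. From analyticity I would first extract a noncommutative power series at the origin, then prove nonnegativity of the coefficients by a single clever evaluation, and finally push convergence from a neighborhood of $0$ out to all of $\mathcal{C}^d$. To begin, note that the zero tuple lies in the interior of $\mathcal{C}^d$: choosing $A=B=\eps I$ for small $\eps$ makes $\bbm A & 0 \\ 0 & B \ebm$ positive definite with joint spectral radius below $1$, and $\mathcal{C}^d$ is open. Since $f$ is analytic, it has a Taylor--Taylor expansion in the sense of noncommutative function theory (\cite{vvw12}); because $f$ is a scalar noncommutative function intertwining homomorphisms, this expansion is a genuine noncommutative power series $f(X)=\sum_\alpha c_\alpha X^\alpha$ with scalar coefficients, valid near $0$. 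These $c_\alpha$ are universal: they are intrinsic to $f$ and independent of which generalized Schur product we evaluate on, since $f$ is defined coherently across the whole universe $\mathcal{S}^d$.

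The Schoenberg heart is the nonnegativity of each $c_\alpha$, and here I would exploit the freedom to choose the algebra. Fix a length $k$ and let $\mathcal{A}=\mathcal{B}$ be the free algebra on generators $a_1,\dots,a_d$ modulo the ideal of words of length exceeding $k$; this is a unital algebra of finite dimension $n=1+d+\cdots+d^k$ in which the words of length at most $k$ form a basis, and words of length larger than $k$ are zero. Let $v_{\mathcal{A}}=v_{\mathcal{B}}$ carry this basis to the standard basis of $\C^n$, producing a saropp $*$, and set $H_i=v_{\mathcal{A}}(a_i)v_{\mathcal{A}}(a_i)\ad$, a rank-one projection, so each $H_i\ge 0$; scaling by a small $t>0$ keeps $tH=(tH_1,\dots,tH_d)$ inside $\mathcal{C}^d$ by openness. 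Applying the defining relation of the generalized Schur product repeatedly gives $H^\alpha=v_{\mathcal{A}}(a_\alpha)v_{\mathcal{A}}(a_\alpha)\ad$, where $a_\alpha$ is the word $\alpha$ read inside $\mathcal{A}$; this equals the rank-one projection $e_\alpha e_\alpha\ad$ when $|\alpha|\le k$ and vanishes when $|\alpha|>k$. Hence the power series collapses to a finite sum,
$$f(tH)=\sum_{|\alpha|\le k} c_\alpha\, t^{|\alpha|}\, e_\alpha e_\alpha\ad,$$
a diagonal matrix whose $(\alpha,\alpha)$ entry is $c_\alpha t^{|\alpha|}$. Since $*$ is positivity preserving and $tH\ge 0$, we have $f(tH)\ge 0$, so every diagonal entry is real and nonnegative; as $t>0$, this forces $c_\alpha\ge 0$ for all $|\alpha|\le k$, and letting $k\to\infty$ yields $c_\alpha\ge 0$ for every $\alpha$.

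It remains to upgrade convergence near $0$ to convergence on all of $\mathcal{C}^d$. For a fixed Schur spectral contraction $X$ I would first verify that the segment $\{sX:0\le s<1+\eps\}$ lies in $\mathcal{C}^d$ for some $\eps>0$: shrinking the off-diagonal block preserves positive-definiteness by a Schur-complement estimate, the joint spectral radius scales down, and openness supplies the extra $\eps$. Restricting $f$ along this segment gives a single-variable function $s\mapsto f(sX)=\sum_j\big(\sum_{|\alpha|=j}c_\alpha X^\alpha\big)s^j$; combined with the local boundedness of $f$ and the now-established nonnegativity of the $c_\alpha$, which makes the degree-graded partial sums monotone and hence tail-controlled, this should force convergence out to $s=1$, so that $f(X)=\sum_\alpha c_\alpha X^\alpha$.

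I expect the main obstacle to be precisely this last step. The coefficient extraction, although it carries the genuine ``Schoenberg'' content, is clean once the truncated free algebra is in place. The delicate point is the geometry of the contraction domain: one must confirm that the positive-definiteness of the block matrix and the joint spectral radius behave monotonically along the scaling segment, and that nonnegativity of coefficients plus local boundedness genuinely suffice to move the convergent expansion off a neighborhood of the origin onto the entire set of Schur spectral contractions, rather than merely onto a possibly smaller Reinhardt domain of convergence.
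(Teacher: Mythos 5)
Your proposal is correct and follows essentially the same route as the paper: analyticity from the differentiability lemma yields the noncommutative power series, and the coefficients are shown nonnegative by evaluating $f$ on the rank-one positives $v_{\mathcal{A}}(x_i)v_{\mathcal{A}}(x_i)\ad$ built from the truncated free algebra, exactly as in the paper's proof. Your added care about scaling by small $t>0$ to stay in the domain and the segment argument for convergence are refinements of steps the paper treats more tersely (via the star-like and balanced structure of $\mathcal{C}^d$), not a different method.
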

	\begin{proof}
		Since $f$ is analytic on each algebra by Lemma \ref{differentiable} and $f(X)$ is contained in the algebra generated by the coordinates of $X,$ it is clear that 
		$f$ has a noncommutative power series $f(X) = \sum c_\alpha X^\alpha$ which converges absolutely on the domain. (The set of Schur spectral contractions is star-like and balanced with respect to the origin.)
		So, it suffices to show that the $c_\alpha \geq 0.$ Let $\mathcal{A}$ be the algebra of truncated noncommutative polynomials of degree $d.$ (That is, noncommutative polynomials quotiented out by the monomials
		of degree greater than $d.$) Find a spatial isomorphism $v_\mathcal{A}$ from $\mathcal{A}$ into $\mathbb{C}^N$ for some large $N.$ Let $x_1, \ldots, x_d$ be the coordinate functions in $\mathcal{A}.$
		Now, $f(v_{\mathcal{A}}(x)v_{\mathcal{A}}(x)^*) =  \sum_{|\alpha|\leq d} c_\alpha v_{\mathcal{A}}(x^\alpha)v_{\mathcal{A}}(x^\alpha)^* \geq 0$ and we are done.
	\end{proof}

\bibliography{references}
\bibliographystyle{plain}

\printindex

\end{document}